\documentclass[11pt]{article}

\usepackage{authblk}

\title{On the $L^{2}$ estimates of the diffusion waves}
\author[1]{Ryo Ikehata\thanks{ikehatarmath51@gmail.com}}
\author[2]{Hiroshi Takeda\thanks{Corresponding author: h-takeda@fukuoka-u.ac.jp}}
\affil[1]{Department of Mathematics, 
Division of Educational Sciences, 
Graduate School of Humanities and Social Sciences, 
Hiroshima University,
Higashi-Hiroshima 739-8524, Japan }
\affil[2]{Department of Applied Mathematics, 
Faculty of Science, Fukuoka University, 
Nanakuma, Jonan-ku, Fukuoka 814-0180, Japan}
\date{}

\usepackage{amssymb,amsmath,amsthm}
\usepackage[dvips]{graphicx}
\usepackage{bm} 

\newtheorem{thm}{Theorem}[section]
\newtheorem{cor}[thm]{Corollary}
\newtheorem{prop}[thm]{Proposition}
\newtheorem{lem}[thm]{Lemma}
\theoremstyle{remark}

\theoremstyle{definition}

\begin{document}
\maketitle

\numberwithin{equation}{section}

\begin{abstract}
In this paper, we investigate the long-time behavior of the $L^2$-norm of solutions to the Cauchy problem for the strongly damped wave equation on $\mathbb{R}^n$, with particular focus on the low-dimensional cases $n=1$ and $n=2$. Although the energy is dissipative, the $L^2$-norm may grow because of low-frequency effects. We compare the diffusion-wave profile of the strongly damped equation with the corresponding free-wave evolution generated by the same initial velocity. Introducing the difference operator $D(t)$ between these two evolutions, we prove that in one dimension $D(t)$ is controlled by $Ct^{1/4}\|g\|_{L^1}$, showing that the free wave remains an effective asymptotic profile. In contrast, in two dimensions $D(t)$ has a logarithmic lower bound when the mass of the initial velocity is nonzero, implying that the wave approximation fails. Corresponding estimates for the original solution are also obtained.
\end{abstract}

\noindent
\textbf{Keywords: }Strongly damped wave equation, diffusion wave, grow-up \\
\noindent
\textbf{MSC2020: }Primary 35L05; Secondary 35B40, 35B45

\newpage
\section{Introduction}

In this paper, we study the global-in-time behavior of the $L^2$-norm
of solutions to the Cauchy problem for the strongly damped wave equation
\begin{equation} \label{eq:1.1}
\left\{
\begin{aligned}
& \partial_t^2 u - \Delta u - \nu \Delta \partial_t u = 0,
    && t > 0,\quad x \in \mathbb{R}^n, \\
& u(0,x) = u_0(x),\quad \partial_t u(0,x) = u_1(x),
    && x \in \mathbb{R}^n,
\end{aligned}
\right.
\end{equation}
with particular emphasis on the low-dimensional cases $n=1$ and $n=2$.
Here, $\nu>0$ is the viscosity coefficient, and $u_0$ and $u_1$ denote the
prescribed initial data. It is standard that, for any
\[
(u_0,u_1) \in H^1(\mathbb{R}^n) \times L^2(\mathbb{R}^n),
\]
there exists a unique solution
\[
u \in C([0,\infty);H^1(\mathbb{R}^n))
\cap C^1([0,\infty);L^2(\mathbb{R}^n))
\]
to \eqref{eq:1.1}; see, for instance, \cite{E}.

Our motivation for deriving sharp time-dependent estimates of
$\|u(t)\|_{L^2}$ is that such estimates reveal information that is not
captured by the standard energy method. Indeed, the total energy
associated with \eqref{eq:1.1}, defined by
\begin{equation} \label{eq:1.2}
E[u](t)
= \frac{1}{2} \int_{\mathbb{R}^n}
\bigl(|\partial_t u(t,x)|^2 + |\nabla_x u(t,x)|^2\bigr)\,dx,
\end{equation}
satisfies the dissipation identity
\begin{equation} \label{eq:1.3}
\frac{d}{dt}E[u](t)
= -\nu \int_{\mathbb{R}^n}|\nabla \partial_t u(t,x)|^2\,dx
\le 0.
\end{equation}
Although \eqref{eq:1.3} shows that the energy is non-increasing, it does
not directly control the $L^2$-norm of the solution itself. In low space
dimensions, this norm may grow in time due to the dominance of
low-frequency components. This growth can be interpreted as a
mass-accumulation phenomenon, or equivalently as a manifestation of the
diffusive structure inherent in the equation.

A precise description of the growth or decay of the $L^2$-norm is
therefore central to the study of diffusion phenomena. It is also a
fundamental ingredient in the perturbation theory for nonlinear problems.
In the analysis of semilinear and quasilinear damped wave equations,
sharp linear estimates are indispensable for controlling Duhamel terms;
if the available $L^2$-bounds are too coarse, the corresponding fixed
point argument cannot be closed in the desired function spaces.

Thus, for damped wave equations, the temporal behavior of the $L^2$-norm
is not merely an auxiliary consequence of energy decay. Rather, it is a
primary quantity that characterizes the interplay between wave propagation
and diffusion, especially in the low-frequency regime.

In this setting, it is well known that solutions may exhibit the so-called
grow-up phenomenon. More precisely, if
\[
m_1 := \int_{\mathbb{R}^n} u_1(x)\,dx \ne 0,
\]
then, for sufficiently large $t$, the solution satisfies
\begin{equation} \label{eq:1.4}
C |m_1| \sqrt{t}
\le \|u(t)\|_{L^2}
\le C\|u_0\|_{L^2} + C\sqrt{t}\,\|u_1\|_{L^1}
    + C\|u_1\|_{L^2}
\end{equation}
for $n=1$, and
\begin{equation} \label{eq:1.5}
C |m_1| \sqrt{\log t}
\le \|u(t)\|_{L^2}
\le C\|u_0\|_{L^2} + C\sqrt{\log t}\,\|u_1\|_{L^1}
    + C\|u_1\|_{L^2}
\end{equation}
for $n=2$. On the other hand, in higher dimensions $n\ge3$, the parabolic
character of the equation becomes dominant, and the solution exhibits the
diffusive decay
\begin{equation} \label{eq:1.6}
C |m_1| t^{-\frac{n-2}{4}}
\le \|u(t)\|_{L^2}
\le C(1+t)^{-\frac{n}{4}}\|u_0\|_{L^1\cap L^2}
 + C(1+t)^{-\frac{n-2}{4}}\|u_1\|_{L^1\cap L^2}.
\end{equation}
These facts were established in \cite{DR,HZ,Ike2014,IO,IT,ITY,P,S}.

By contrast, for the standard wave equation without damping,
\begin{equation} \label{eq:1.7}
\begin{cases}
\partial_t^2 v - \Delta v = 0,
    & t>0,\quad x\in\mathbb{R}^n, \\
v(0,x)=u_0(x),\quad \partial_t v(0,x)=u_1(x),
    & x\in\mathbb{R}^n,
\end{cases}
\end{equation}
it is known that the $L^2$-norm of the solution remains globally bounded
in time for $n\ge3$, whereas it grows for $n=1$ and $n=2$ in a manner
analogous to \eqref{eq:1.1}. Namely, for $t\gg1$, the solution to
\eqref{eq:1.7} satisfies
\begin{equation} \label{eq:1.8}
C |m_1| \sqrt{t}
\le \|v(t)\|_{L^2}
\le C\|u_0\|_{L^2} + C\sqrt{t}\,\|u_1\|_{L^1}
    + C\|u_1\|_{L^2}
\end{equation}
for $n=1$,
\begin{equation} \label{eq:1.9}
C |m_1| \sqrt{\log t}
\le \|v(t)\|_{L^2}
\le C\|u_0\|_{L^2} + C\sqrt{\log t}\,\|u_1\|_{L^1}
    + C\|u_1\|_{L^2}
\end{equation}
for $n=2$, and
\begin{equation} \label{eq:1.10}
C |m_1|
\le \|v(t)\|_{L^2}
\le C\|u_0\|_{L^2} + C\|u_1\|_{L^1 \cap L^2}
\end{equation}
for $n\ge3$. These results are due to \cite{CI,CT,Ike2023,Ta2026}.

The estimates \eqref{eq:1.8}--\eqref{eq:1.10} suggest that the growth
rates in \eqref{eq:1.4} and \eqref{eq:1.5} reflect the underlying
hyperbolic nature of solutions to \eqref{eq:1.1}. The main purpose of the
present paper is to clarify to what extent this hyperbolic character
persists. To this end, we recall the asymptotic results obtained in
\cite{Ike2014,IO,IT}. There, the solution to \eqref{eq:1.1} is
approximated globally in time by the diffusion wave
\begin{equation*}
G(t,x)
:= \mathcal{F}^{-1}\left[e^{-\frac{\nu}{2} t|\xi|^2}\omega(t,\xi) \right](x),
\qquad
\omega(t,\xi):=\frac{\sin(t|\xi|)}{|\xi|},
\end{equation*}
in the sense that
\begin{equation*}
\|u(t)-m_1G(t)\|_2 = o(d_n(t))
\qquad \text{as } t\to\infty.
\end{equation*}
Here, $\mathcal{F}^{-1}$ denotes the inverse Fourier transform, and
\begin{equation*}
d_n(t) :=
\begin{cases}
\sqrt{t}, & n=1, \\
\sqrt{\log t}, & n=2, \\
t^{-\frac{n-2}{4}}, & n\ge3,
\end{cases}
\end{equation*}
which corresponds to the rates in \eqref{eq:1.4}--\eqref{eq:1.6}. In this
sense, $u(t)$ is asymptotic to $m_1G(t)$ as $t\to\infty$. The key point
behind this fact is that $\|u(t)-G(t)*u_1\|_2$ is of lower order in time
(see \eqref{eq:2.2}-\eqref{eq:2.4} below for the precise estimate), whereas, for
$u_1\in L^1\cap L^2$, one can in general guarantee only
\[
\|G(t)*u_1-m_1G(t)\|_2=o(d_n(t))
\qquad \text{as } t\to\infty.
\]

Motivated by these observations, we ask whether the solution to the
strongly damped wave equation \eqref{eq:1.1} can be approximated, in low
dimensions, by the solution to the corresponding standard wave equation.
In view of the preceding discussion, this amounts to comparing
$G(t)*u_1$ not only with the leading profile $m_1G(t)$, but also with the
free wave evolution generated by the same initial velocity. Let $W(t)$
denote the solution operator for the standard wave equation, defined by
\begin{equation*}
W(t)g
:= \mathcal{F}^{-1}\left[\omega(t,\xi)\widehat{g}(\xi)\right](x),
\end{equation*}
where $\widehat{g}$ is the Fourier transform of $g$. We define the
difference operator
\begin{equation*}
D(t)g := G(t)*g - W(t)g.
\end{equation*}
The question is whether $D(t)g$ becomes negligible in $L^2$ as
$t\to\infty$. Our results reveal a striking contrast between one and two
space dimensions: such an approximation is valid in dimension one,
whereas it fundamentally fails in dimension two.

We now state the main results of this paper. The first theorem shows that,
in one space dimension, $G(t)*g$ is approximated by $W(t)g$ with the
following rate.
\begin{thm} \label{thm:1.1}
Let $n=1$ and $g\in L^1(\mathbb{R})$. Then there exists a constant $C>0$
such that
\begin{equation} \label{eq:1.11}
\|D(t)g\|_{L^2} \le Ct^{1/4}\|g\|_{L^1}
\end{equation}
for all $t\ge0$.
\end{thm}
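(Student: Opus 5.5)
Our approach is to work entirely on the Fourier side via Plancherel. The symbol of $D(t)$ is
\[
\widehat{D(t)g}(\xi)=\bigl(e^{-\frac{\nu}{2}t\xi^2}-1\bigr)\frac{\sin(t|\xi|)}{|\xi|}\,\widehat{g}(\xi),
\]
and $\|\widehat g\|_{L^\infty}\le\|g\|_{L^1}$. Hence
\[
\|D(t)g\|_{L^2}^2\le C\|g\|_{L^1}^2\int_{\mathbb{R}}\bigl(1-e^{-\frac{\nu}{2}t\xi^2}\bigr)^2\frac{\sin^2(t|\xi|)}{\xi^2}\,d\xi ,
\]
so it suffices to prove that the integral $I(t)$ on the right is $O(t^{1/2})$. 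The essential point is that the factor $1-e^{-\frac{\nu}{2}t\xi^2}$ kills the non-integrable singularity $\xi^{-2}$ at the origin, which is exactly the singularity responsible for the $\sqrt t$ growth of each of $G(t)*g$ and $W(t)g$ separately.

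We bound $I(t)$ by splitting the integral at the parabolic scale $|\xi|=t^{-1/2}$. On $|\xi|\le t^{-1/2}$ we use $1-e^{-s}\le s$, which gives the integrand bound $(\nu t\xi^2/2)^2\xi^{-2}=C t^2\xi^2$. Integrating over $|\xi|\le t^{-1/2}$ then yields $Ct^2\cdot t^{-3/2}=Ct^{1/2}$. On $|\xi|\ge t^{-1/2}$ we use $|1-e^{-s}|\le1$ and $\sin^2\le1$, which gives $\int_{|\xi|\ge t^{-1/2}}\xi^{-2}\,d\xi=2t^{1/2}$. Together these give $I(t)\le Ct^{1/2}$, and therefore $\|D(t)g\|_{L^2}\le Ct^{1/4}\|g\|_{L^1}$ for $t>0$. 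The case $t=0$ is trivial, since $D(0)=0$.

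The only delicate point is the choice of splitting scale. Splitting instead at the wave scale $|\xi|\sim t^{-1}$, as one would for $W(t)$ alone, gives only $t^{1/2}$ for the norm. The improvement to $t^{1/4}$ comes from noticing that the damping factor annihilates the low frequencies up to the diffusive scale $t^{-1/2}$, not merely the wave scale. No cancellation from the oscillation $\sin(t|\xi|)$ is needed, so the constant depends only on $\nu$.
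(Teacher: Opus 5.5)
Your proof is correct and is essentially the paper's argument. The paper likewise bounds $\|D(t)g\|_2$ by the $L^2$ norm of the multiplier times $\|g\|_1$, rescales $\sqrt{t}\,\xi\mapsto\xi$, and splits at $|\xi|=1$ (exactly your split at the diffusive scale $|\xi|=t^{-1/2}$), using $1-e^{-s}\le s$ on the inner region and trivial bounds on the outer one to get the same $Ct^{1/2}$ bound for the squared multiplier norm.
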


In sharp contrast, in two space dimensions, the difference $D(t)g$
exhibits logarithmic divergence. This shows that the standard wave
equation cannot serve as an asymptotic profile for \eqref{eq:1.1} in
dimension two.
\begin{thm} \label{thm:1.2}
Let $n=2$ and $g\in L^1(\mathbb{R}^2)$. Suppose that
\[
m_g := \int_{\mathbb{R}^2} g(y)\,dy \ne 0.
\]
Then there exist constants $C>0$ and $T>0$ such that
\begin{equation} \label{eq:1.12}
\|D(t)g\|_{L^2} \ge C|m_g|\sqrt{\log t}
\end{equation}
for all $t\ge T$.
\end{thm}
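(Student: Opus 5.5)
By Plancherel, $\|D(t)g\|_{L^2}$ equals, up to a constant, the $L^2$ norm of the symbol
\[
\bigl(e^{-\frac{\nu}{2}t|\xi|^2}-1\bigr)\frac{\sin(t|\xi|)}{|\xi|}\,\widehat g(\xi).
\]
The plan is to show that this symbol is large on an annulus of frequencies where the heat factor has already killed most of the amplitude, while $\widehat g(\xi)$ is still close to $m_g$. The key point is that $1-e^{-\frac{\nu}{2}t|\xi|^2}$ is bounded below once $|\xi|\gtrsim t^{-1/2}$. On the other hand, $|\widehat g(\xi)|\ge |m_g|/2$ for $|\xi|\le \delta$, where $\delta>0$ depends only on $g$ and comes from the continuity of $\widehat g$ at $0$, since $g\in L^1$. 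So for large $t$ I restrict the integral to the annulus
\[
A_t=\{\,t^{-1/2}\le |\xi|\le \delta\,\}.
\]
On $A_t$ we have $1-e^{-\frac{\nu}{2}t|\xi|^2}\ge 1-e^{-\nu/2}=:c_\nu>0$, and therefore
\[
\|D(t)g\|_{L^2}^2\ \ge\ C\,c_\nu^2\,\frac{|m_g|^2}{4}\int_{A_t}\frac{\sin^2(t|\xi|)}{|\xi|^2}\,d\xi .
\]

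Next I pass to polar coordinates in $\mathbb{R}^2$. This gives
\[
2\pi\int_{t^{-1/2}}^{\delta}\frac{\sin^2(tr)}{r}\,dr,
\]
and after substituting $s=tr$ it becomes $2\pi\int_{\sqrt t}^{\delta t}\frac{\sin^2 s}{s}\,ds$. Writing $\sin^2 s=\tfrac12(1-\cos 2s)$, the main part is
\[
\tfrac12\log(\delta\sqrt t),
\]
which is $\ge \tfrac18\log t$ once $t$ is large. The oscillatory part $\tfrac12\int_{\sqrt t}^{\delta t}\frac{\cos 2s}{s}\,ds$ is bounded uniformly by integration by parts; in fact it is $O(t^{-1/2})$. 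Combining these, $\|D(t)g\|_{L^2}^2\ge C|m_g|^2\log t$ for $t\ge T$, where $T$ is chosen so that $\delta\sqrt t\ge t^{1/4}$, the annulus is nonempty, and the oscillatory error is absorbed. Taking square roots gives \eqref{eq:1.12}.

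The main obstacle is only bookkeeping. The constant in the lower bound depends only on $\nu$, while $T$ depends on $g$ through $\delta$; this is consistent with the statement. No earlier result beyond Plancherel and the Riemann–Lebesgue continuity of $\widehat g$ is needed.
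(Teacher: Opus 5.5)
Your proof is correct, and it takes a more direct route than the paper. You work entirely on the Fourier side. Since $g\in L^1(\mathbb{R}^2)$, $\widehat g$ is continuous with $\widehat g(0)=m_g$ (up to the normalization constant). So on the annulus $t^{-1/2}\le|\xi|\le\delta$ you bound both $|\widehat g|$ and $1-e^{-\frac{\nu}{2}t|\xi|^2}$ from below pointwise, and what remains is the explicit integral $\int_{\sqrt t}^{\delta t}\sin^2 s\,\frac{ds}{s}$. The paper deliberately avoids this pointwise use of $\widehat g$, and presents that avoidance as its main technical point. Its route has three steps:
\begin{itemize}
\item It inserts the Gaussian cutoff $e^{-\beta|\xi|^2}$ to get the kernel $J^{(\beta)}(t)$, which satisfies $\|J^{(\beta)}(t)\|_2\le C_\beta\sqrt{\log(t+e)}$ and $\|\nabla J^{(\beta)}(t)\|_2\le C_\beta$ (Lemma~\ref{lem:2.3}).
\item It proves in physical space that $\|J^{(\beta)}(t)*g-m_gJ^{(\beta)}(t)\|_2=o(\sqrt{\log t})$ (Proposition~\ref{prop:2.4}), by splitting at $|y|=(\log(t+e))^{1/4}$ and using the mean value theorem.
\item It bounds $\|J^{(\beta)}(t)\|_2$ from below on $t^{-1/2}\le|\xi|\le 1$, integrating by parts in the oscillatory term with the damping factor $(e^{-\nu t r^2}-1)^2$ still inside.
\end{itemize}
Your version is shorter and gives explicit constants: $C$ depends only on $\nu$, and $T$ depends on $g$ through $\delta$. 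Because you bound the nonnegative weights from below before splitting $\sin^2$, the oscillatory part reduces to the elementary $\int\cos(2s)\,s^{-1}\,ds=O(t^{-1/2})$. The paper's route buys a profile statement, $J^{(\beta)}(t)*g\approx m_gJ^{(\beta)}(t)$ in $L^2$, using only $\|g\|_1$ and the tail $\int_{|y|>L_t}|g|$. That physical-space mechanism could transfer to settings where pointwise control of $\widehat g$ near the origin is unavailable or inconvenient. For the theorem as stated, though, continuity of $\widehat g$ comes for free from $g\in L^1$ (by dominated convergence), so your argument is complete as it stands.
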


The proof of Theorem~\ref{thm:1.1} is based on a straightforward scaling argument,
using the fact that $\omega(t,\xi)$ belongs to $L^2$ in one space
dimension. To isolate the hyperbolic nature of the solution, however, it
is essential to use the change of variables associated with the auxiliary
parabolic scaling, namely replacing $\sqrt{t}\,\xi$ by $\xi$.

By contrast, in two space dimensions, $\omega(t,\xi)$ does not belong to
$L^2$. We therefore introduce the auxiliary function
\begin{equation} \label{eq:1.13}
J^{(\beta)}(t,x)
:= \mathcal{F}^{-1}\left[
e^{-\beta |\xi|^2}\bigl(e^{-\frac{\nu}{2}t|\xi|^2}-1\bigr)
\omega(t,\xi)
\right](x),
\end{equation}
where $\beta>0$ is a parameter. This construction is based on the
smoothing method used in \cite{CT,Ike2023,Ta2026}. The key point is the
proposition proved below. As shown in \eqref{eq:3.5}, it allows us to
reduce the lower bound for $\|D(t)g\|_{L^2}$ to a lower bound for
\[
|m_g|\,\|J^{(\beta)}(t)\|_{L^2},
\]
for which the high-frequency contribution $|\xi|\gg1$ can essentially be
ignored. This method does not require the continuity property
$\widehat{g}\in C(\mathbb{R}^2)$, which follows from
$g\in L^1(\mathbb{R}^2)$. This change of perspective is the main
technical contribution of the present paper.

As a consequence of the two theorems above, we obtain the following
description of the global behavior of the difference between the solution
to \eqref{eq:1.1} and the corresponding solution to the standard wave
equation.
\begin{cor} \label{cor:1.3}
{\rm (i)} Let $n=1$, $u_0\in H^1(\mathbb{R})$, and
$u_1\in L^1(\mathbb{R})\cap L^2(\mathbb{R})$. Then the solution $u$ to
\eqref{eq:1.1} satisfies
\begin{equation} \label{eq:1.14}
\|u(t)-W(t)u_1\|_{L^2}
\le C\|u_0\|_{L^2} + Ct^{1/4}\|u_1\|_{L^1}
    + C\|u_1\|_{L^2}
\end{equation}
for all $t\ge2$.\\

\noindent
{\rm (ii)} Let $n=2$, $u_0\in H^1(\mathbb{R}^2)$, and
$u_1\in L^1(\mathbb{R}^2)\cap L^2(\mathbb{R}^2)$. Suppose that $m_1\ne0$.
Then the solution $u$ to \eqref{eq:1.1} satisfies
\begin{equation} \label{eq:1.15}
C|m_1|\sqrt{\log t}
\le \|u(t)-W(t)u_1\|_{L^2}
\le C\|u_0\|_{L^2} + C\sqrt{\log t}\,\|u_1\|_{L^1}
    + C\|u_1\|_{L^2}
\end{equation}
for all sufficiently large $t$.
\end{cor}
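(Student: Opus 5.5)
The plan is to split the solution as
\[
u(t)-W(t)u_1=\bigl(u(t)-G(t)*u_1\bigr)+D(t)u_1 ,
\]
and to handle the two pieces separately: the first by the known estimates \eqref{eq:2.2}--\eqref{eq:2.4}, the second by Theorems~\ref{thm:1.1} and \ref{thm:1.2}. I take \eqref{eq:2.2}--\eqref{eq:2.4} (cited in the introduction as the precise form of the claim that $\|u(t)-G(t)*u_1\|_2$ is of lower order) to give, for $t\ge 2$ (or $t\ge1$),
\[
\|u(t)-G(t)*u_1\|_{L^2}\le C\|u_0\|_{L^2}+C\|u_1\|_{L^2}+C\|u_1\|_{L^1}
\]
in both $n=1,2$. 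If it instead decays, say like $t^{-1/2}\|u_1\|_{L^1}$ or $(\log t)^{-1/2}$, this is even better. The reason to expect such a bound is the Fourier representation. The $u_0$-part has a multiplier bounded by a constant. The $u_1$-part differs from $e^{-\nu t|\xi|^2/2}\omega(t,\xi)$ by terms that are bounded at high frequency and, at low frequency, are $O(|\xi|^2 t)e^{-ct|\xi|^2}$ times a factor controlled by $|\widehat{u_1}|\le\|u_1\|_{L^1}$.

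For part (i), apply the triangle inequality to the splitting and use Theorem~\ref{thm:1.1} with $g=u_1$. This bounds $\|D(t)u_1\|_{L^2}$ by $Ct^{1/4}\|u_1\|_{L^1}$. Adding the bound for the first piece gives \eqref{eq:1.14}, after absorbing any constant multiple of $\|u_1\|_{L^1}$ into $Ct^{1/4}\|u_1\|_{L^1}$, which is allowed since $t\ge2$.

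For part (ii), the upper bound comes from the triangle inequality together with \eqref{eq:1.5} and \eqref{eq:1.9}:
\[
\|u(t)-W(t)u_1\|_{L^2}\le\|u(t)\|_{L^2}+\|W(t)u_1\|_{L^2}.
\]
Each term on the right is already bounded by $C\|u_0\|_{L^2}+C\sqrt{\log t}\,\|u_1\|_{L^1}+C\|u_1\|_{L^2}$. (The $u_0$-part of $v$ is not present here, which only helps.)

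For the lower bound, use the reverse triangle inequality:
\[
\|u(t)-W(t)u_1\|_{L^2}\ge\|D(t)u_1\|_{L^2}-\|u(t)-G(t)*u_1\|_{L^2}\ge C|m_1|\sqrt{\log t}-C',
\]
where Theorem~\ref{thm:1.2} is applied with $g=u_1$, so $m_g=m_1\ne0$, and $C'$ depends on the norms of the data. Since $\sqrt{\log t}\to\infty$, for $t$ large enough we have $C'\le \tfrac{C}{2}|m_1|\sqrt{\log t}$, and this yields \eqref{eq:1.15} with constant $C/2$.

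The main obstacle is making sure the remainder estimate \eqref{eq:2.2}--\eqref{eq:2.4} really is bounded, or at least $o(\sqrt{\log t})$, in $n=2$. If only an estimate of the form $o(d_n(t))$ were available, the lower-bound argument would still go through, because any $o(\sqrt{\log t})$ error is absorbed for large $t$. The upper bound does not depend on this point at all, since it uses only \eqref{eq:1.5} and \eqref{eq:1.9}.
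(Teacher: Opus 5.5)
Your proposal is correct and follows essentially the same route as the paper. The paper uses the identical decomposition $u(t)-W(t)u_1=K_0(t)*u_0+(K_1(t)-G(t))*u_1+D(t)u_1$, bounds the first two pieces by \eqref{eq:2.2} and \eqref{eq:2.4}, uses Theorem~\ref{thm:1.1} for (i), takes the upper bound in (ii) directly from \eqref{eq:1.5} and \eqref{eq:1.9}, and gets the lower bound from Theorem~\ref{thm:1.2} via the reverse triangle inequality. The step you flag as the main obstacle is in fact immediate: \eqref{eq:2.2} and \eqref{eq:2.4} give $\|u(t)-G(t)*u_1\|_{L^2}\le C\|u_0\|_{L^2}+C(1+t)^{-n/4}\|u_1\|_{L^1}+Ce^{-ct}\|u_1\|_{L^2}$, which is bounded and hence $o(\sqrt{\log t})$.
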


In view of part~(i) of Corollary~\ref{cor:1.3} and \eqref{eq:1.4}, the
solution to \eqref{eq:1.1} approaches the solution to the wave equation
\eqref{eq:1.7} in one space dimension. On the other hand, part~(ii) of
Corollary~\ref{cor:1.3}, together with \eqref{eq:1.5}, shows that such an
approximation fails in two space dimensions.

This paper is organized as follows. In Section~2, we fix notation and
recall the representation formula for solutions to \eqref{eq:1.1},
together with known asymptotic results from previous studies. In
Section~3, we prove the main theorems of this paper.

\section{Preliminaries}
In this section we collect the notation and preliminary estimates needed to state and prove the main results rigorously.
For simplicity, we denote $\|\cdot\|_{L^p}$ by $\|\cdot\|_p$ in what follows.

\subsection{Notation and estimates for the solution of~\eqref{eq:1.1}}

We begin by recalling the notation used throughout the paper, together
with the basic representation formula for solutions to~\eqref{eq:1.1}.
The characteristic roots are given by
\begin{equation*}
\lambda_{\pm}(\xi)
=-\nu |\xi|^{2}\pm \sqrt{\nu^{2}|\xi|^{4}-|\xi|^{2}}.
\end{equation*}
Using these roots, the solution can be represented in Fourier space as
\begin{equation*}
\widehat{u}(t,\xi)
=\widehat{K}_0(t,\xi)\widehat{u}_0(\xi)
 +\widehat{K}_1(t,\xi)\widehat{u}_1(\xi),
\end{equation*}
where
\begin{equation*}
\widehat{K}_0(t,\xi)
=\frac{-\lambda_-(\xi)e^{\lambda_+(\xi)t}
        +\lambda_+(\xi)e^{\lambda_-(\xi)t}}
       {\lambda_+(\xi)-\lambda_-(\xi)},
\qquad
\widehat{K}_1(t,\xi)
=\frac{e^{\lambda_+(\xi)t}-e^{\lambda_-(\xi)t}}
       {\lambda_+(\xi)-\lambda_-(\xi)}.
\end{equation*}
Equivalently, in physical space, we obtain
\begin{equation} \label{eq:2.1}
u(t)=K_0(t) \ast u_0 +K_1(t) \ast u_1.
\end{equation}

We shall use the following known estimates for the global behavior of
$K_0(t)$ and $K_1(t)$ in $L^2$.

\begin{lem}[\cite{P, S}]\label{lem:2.1}
Let $n \ge 1$.  Then there exist constants $C>0$ and $c>0$,
independent of $t$ and $g$, such that
\begin{equation} \label{eq:2.2}
\|K_{0}(t)\ast g\|_{2} \le C \|g\|_{2},
\qquad t\ge0
\end{equation}
and
\begin{equation} \label{eq:2.3}
\|K_{1}(t)\ast g\|_{2}
\le C d_{n}(t) \|g\|_{1}+ C e^{-ct} \|g\|_{2},
\qquad t\ge0.
\end{equation}
\end{lem}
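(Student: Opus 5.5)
The statement to prove is Lemma~\ref{lem:2.1}, the $L^2$ bounds for $K_0(t)\ast g$ and $K_1(t)\ast g$. I would work entirely on the Fourier side and use Plancherel, so that both estimates reduce to pointwise bounds on the multipliers $\widehat K_0(t,\xi)$ and $\widehat K_1(t,\xi)$.

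I split frequency space into a low-frequency region $|\xi|\le \delta$ and a high-frequency region $|\xi|\ge\delta$, with $\delta$ small (say $\delta<1/(2\nu)$), and a transition zone near $|\xi|=1/\nu$ where the roots coincide. Since $\lambda_+\ne\lambda_-$ fails there, I would not use the quotient formulas directly. Instead I would use the equivalent representation
\[
\widehat K_1=e^{-\nu|\xi|^2 t}\,\frac{\sinh(t\sqrt{\nu^2|\xi|^4-|\xi|^2})}{\sqrt{\nu^2|\xi|^4-|\xi|^2}},
\]
which is entire in the square root and hence continuous across the double root.

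In the low-frequency region the roots are $\lambda_\pm=-\nu|\xi|^2\pm i|\xi|\sqrt{1-\nu^2|\xi|^2}$. This gives
\[
|\widehat K_1|\le C e^{-\nu t|\xi|^2}\,\Bigl|\frac{\sin(t|\xi|\sqrt{1-\nu^2|\xi|^2})}{|\xi|\sqrt{1-\nu^2|\xi|^2}}\Bigr|
\quad\text{and}\quad |\widehat K_0|\le C e^{-\nu t|\xi|^2}.
\]
For $K_0$ this immediately gives a multiplier bounded by $C$.

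For $K_1$ I bound $|\widehat g|\le\|g\|_1$ and estimate
\[
\int_{|\xi|\le\delta} e^{-2\nu t|\xi|^2}\min\bigl(t^2,|\xi|^{-2}\bigr)\,d\xi .
\]
\begin{itemize}
\item For $n=1$, splitting at $|\xi|=1/t$ gives $O(t)$.
\item For $n=2$, the contribution of $|\xi|\le 1/t$ is $O(1)$, and that of $1/t\le|\xi|\le\delta$ is $\int_{1/t}^{\delta} r^{-1}\,dr=O(\log t)$.
\item For $n\ge3$, the substitution $\eta=\sqrt t\,\xi$ gives $t^{-(n-2)/2}\int e^{-2\nu|\eta|^2}|\eta|^{-2}\,d\eta$, which is finite.
\end{itemize}
Taking square roots yields $C d_n(t)\|g\|_1$ for $t\ge2$. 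For small $t$ the integrand is bounded by $t^2$ on a bounded set, which is harmless, and the $d_n$ definitions can be adjusted to $1+t$ or $\log(2+t)$ as needed.

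In the high-frequency region $|\xi|\ge\delta$, the real parts of both roots are at most $-c$. Away from the double root this is clear; near it both roots are close to $-1/(2\nu)$, and the $\sinh$ form gives $|\widehat K_j|\le C(1+t)e^{-c't}$. For large $|\xi|$ one has $\lambda_+\to-1/\nu$ and $\lambda_-\sim-2\nu|\xi|^2$, so that $|\widehat K_1|\le C|\xi|^{-2}e^{-ct}$ and $|\widehat K_0|\le C$. For $K_0$ we need uniform boundedness, which follows since $|\lambda_-|/|\lambda_+-\lambda_-|\le C$ away from the double root. For $K_1$ we get $|\widehat K_1|\le Ce^{-ct}$ on $|\xi|\ge\delta$, and Plancherel then yields $Ce^{-ct}\|g\|_2$.

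The main obstacle is the uniform treatment near the double root $|\xi|=1/\nu$, where the individual terms of $\widehat K_0$ blow up. I would handle it by writing
\[
\widehat K_0=e^{-\nu|\xi|^2t}\bigl(\cosh(t\mu)+\nu|\xi|^2\,\sinh(t\mu)/\mu\bigr),\qquad \mu=\sqrt{\nu^2|\xi|^4-|\xi|^2},
\]
and using $|\sinh(t\mu)/\mu|\le t e^{t|\mathrm{Re}\,\mu|}$ on a compact annulus around $1/\nu$. There $\nu|\xi|^2-|\mathrm{Re}\,\mu|\ge c>0$, so the factor $te^{-ct}$ stays bounded.
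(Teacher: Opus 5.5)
The paper does not prove this lemma but quotes it from [P, S]. Your argument is the standard Fourier-multiplier proof behind those references, and it is correct: Plancherel, a split at $|\xi|=\delta$, the bound $|\widehat g|\le\|g\|_1$ at low frequencies, and the representations $\widehat K_1=e^{-\nu|\xi|^2t}\sinh(t\mu)/\mu$ and $\widehat K_0=e^{-\nu|\xi|^2t}\bigl(\cosh(t\mu)+\nu|\xi|^2\sinh(t\mu)/\mu\bigr)$ to pass through the double root. The only slips are numerical and harmless, since you only use that $\mathrm{Re}\,\lambda_\pm\le -c<0$ on $|\xi|\ge\delta$: with the paper's roots, the double root at $|\xi|=1/\nu$ equals $-1/\nu$, and $\lambda_+\to-1/(2\nu)$ as $|\xi|\to\infty$, so you have these two values interchanged.
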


Lemma~\ref{lem:2.1} shows that the leading large-time behavior of the
solution is governed by $K_1(t)$.  Moreover, $K_1(t)$ is close to
$G(t)$ in the following sense.

\begin{lem}[\cite{ITY, IT, Ta2022}]\label{lem:2.2}
Let $n \ge 1$.  Then there exist constants $C>0$ and $c>0$,
independent of $t$ and $g$, such that
\begin{equation} \label{eq:2.4}
\|K_{1}(t)\ast g-G(t)\ast g\|_{2}
\le C (1+t)^{-\frac{n}{4}} \|g\|_{1}+ C e^{-ct} \|g\|_{2},
\qquad t\ge0.
\end{equation}
\end{lem}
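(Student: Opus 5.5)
The plan is to prove Lemma~\ref{lem:2.2} entirely on the Fourier side by Plancherel, splitting frequencies into a low region $|\xi|\le\delta$ and a high region $|\xi|\ge\delta$ for a small fixed $\delta>0$. The low region will produce the term $C(1+t)^{-n/4}\|g\|_1$, using $\|\widehat g\|_\infty\le\|g\|_1$. The high region will produce $Ce^{-ct}\|g\|_2$, using $\|\widehat g\|_2=C\|g\|_2$. Throughout I normalize the characteristic roots so that, for small $|\xi|$,
\[
\lambda_\pm(\xi)=-\tfrac{\nu}{2}|\xi|^2\pm i|\xi|\sigma(\xi),\qquad \sigma(\xi)=\sqrt{1-\tfrac{\nu^2}{4}|\xi|^2}.
\]
This is the normalization for which $G$ is the correct diffusion wave; if the paper's displayed roots differ by a harmless constant, the same argument applies.

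\textbf{Low frequencies.} For $|\xi|\le\delta$ the kernel is
\[
\widehat K_1(t,\xi)=e^{-\frac{\nu}{2}t|\xi|^2}\,\frac{\sin(t|\xi|\sigma(\xi))}{|\xi|\sigma(\xi)}.
\]
Compared with $\widehat G(t,\xi)=e^{-\frac{\nu}{2}t|\xi|^2}\sin(t|\xi|)/|\xi|$, I split the difference into two pieces.
\begin{itemize}
\item The first piece is $\bigl(\sin(t|\xi|\sigma)-\sin(t|\xi|)\bigr)/(|\xi|\sigma)$. Since $1-\sigma(\xi)=O(|\xi|^2)$, the mean value theorem bounds it by $Ct|\xi|^2$.
\item The second piece is $\sin(t|\xi|)\bigl(1/(|\xi|\sigma)-1/|\xi|\bigr)$, which is bounded by $C|\xi|$.
\end{itemize}
Hence
\[
|\widehat K_1-\widehat G|\le C\bigl(t|\xi|^2+|\xi|\bigr)e^{-\frac{\nu}{2}t|\xi|^2}\le C(1+|\xi|)e^{-ct|\xi|^2}.
\]
Using $\|\widehat g\|_\infty\le\|g\|_1$, the $L^2(|\xi|\le\delta)$ norm of this weight is $O\bigl((1+t)^{-n/4}\bigr)$. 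The scaling $\eta=\sqrt t\,\xi$ gives the decay for $t\ge1$, and boundedness of the region gives the bound for $t\le1$.

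\textbf{High frequencies.} For $|\xi|\ge\delta$ it suffices to show that $|\widehat K_1(t,\xi)|$ and $|\widehat G(t,\xi)|$ are both bounded by $Ce^{-ct}$ uniformly in $\xi$; Plancherel then gives $Ce^{-ct}\|g\|_2$. The bound for $\widehat G$ is immediate: $|\widehat G|\le\delta^{-1}e^{-\frac{\nu}{2}\delta^2t}$. For $\widehat K_1$ I would treat three sub-regions.
\begin{itemize}
\item \emph{Bounded oscillatory region} $\delta\le|\xi|\le 2/\nu-\varepsilon$. Here $\operatorname{Re}\lambda_\pm=-\frac{\nu}{2}|\xi|^2\le-c$ and $|\lambda_+-\lambda_-|$ is bounded below, so the bound is direct.
\item \emph{Large frequencies} $|\xi|\ge 2/\nu+\varepsilon$ (overdamped regime). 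Here $\lambda_+\le -c<0$ uniformly, since $\lambda_+\to-1/\nu$. Also $\lambda_+-\lambda_-\ge c|\xi|^2\ge c$, so $|\widehat K_1|\le Ce^{-ct}$.
\item \emph{Critical shell} $\bigl||\xi|-2/\nu\bigr|\le\varepsilon$, where the roots coincide. I would write
\[
\widehat K_1=t\,e^{\frac{\lambda_++\lambda_-}{2}t}\,\frac{\sinh(\mu t)}{\mu t},\qquad \mu=\frac{\lambda_+-\lambda_-}{2}.
\]
Since $|\sinh z/z|\le \cosh(\operatorname{Re}z)$ type bounds hold with $|\mu|$ small, this gives $|\widehat K_1|\le Ct\,e^{-(c_0-|\mu|)t}$. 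With $c_0=\frac{\nu}{2}|\xi|^2\approx 2/\nu$ and $\varepsilon$ small, this is at most $Ce^{-ct}$.
\end{itemize}

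\textbf{Main obstacle.} The expected difficulty is the uniformity of the high-frequency bound near the double root $|\xi|=2/\nu$. There the quotient $1/(\lambda_+-\lambda_-)$ degenerates, and it must be absorbed through the $\sinh(\mu t)/(\mu t)$ representation rather than estimated termwise. The low-frequency estimate is routine once the factor $t|\xi|^2$ is absorbed into the Gaussian.
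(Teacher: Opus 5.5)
The paper gives no proof of this lemma; it is quoted from \cite{ITY, IT, Ta2022}. Your argument is a correct, self-contained version of the standard Plancherel argument with a low/high-frequency splitting. The low-frequency bound $C(t|\xi|^2+|\xi|)e^{-\frac{\nu}{2}t|\xi|^2}$ gives exactly the $(1+t)^{-n/4}\|g\|_1$ term. Your three high-frequency sub-regions, including the $\sinh(\mu t)/(\mu t)$ treatment of the double root, are all handled correctly.

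One correction to your normalization remark. The roots displayed in the paper, $-\nu|\xi|^2\pm\sqrt{\nu^2|\xi|^4-|\xi|^2}$, solve $\lambda^2+2\nu|\xi|^2\lambda+|\xi|^2=0$, i.e.\ they belong to viscosity $2\nu$. With them the lemma would actually be false. The mismatch between $e^{-\nu t|\xi|^2}$ and $e^{-\frac{\nu}{2}t|\xi|^2}$ on $|\xi|\sim t^{-1/2}$ leaves an $L^2$ contribution of order $t^{-(n-2)/4}$, which grows like $t^{1/4}$ when $n=1$. So the discrepancy is a typo in the paper, not a harmless constant. Your roots $-\frac{\nu}{2}|\xi|^2\pm i|\xi|\sigma(\xi)$, which come from the symbol $\lambda^2+\nu|\xi|^2\lambda+|\xi|^2$ of \eqref{eq:1.1}, are the right ones.

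As a minor simplification, the critical shell needs no separate treatment. The identity $\widehat{K}_1(t,\xi)=\int_0^t e^{\lambda_+ s+\lambda_-(t-s)}\,ds$ gives $|\widehat{K}_1(t,\xi)|\le t\,e^{t\operatorname{Re}\lambda_+(\xi)}$ for every $\xi$. Since $\operatorname{Re}\lambda_+\le-\min\{\frac{\nu}{2}\delta^2,\frac{1}{\nu}\}$ on $|\xi|\ge\delta$, this yields the $Ce^{-ct}$ bound in one line.
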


A direct proof of the case $n=2$ is given in~\cite{Ta2022}, while the
case $n\ge3$ is treated in~\cite{IT}.  The one-dimensional case follows
by the same argument.

\subsection{Estimates for $J^{(\beta)}(t,x)$}

We next restrict ourselves to two space dimensions and study the global
behavior of the auxiliary function $J^{(\beta)}(t,x)$.

\begin{lem}\label{lem:2.3}
Let $n=2$ and $\beta >0$.  Then, for all $t\ge0$,
\begin{equation} \label{eq:2.5}
\| J^{(\beta)}(t)\|_{2} \le C_{\beta} \sqrt{\log (t+e)},
\qquad
\| \nabla J^{(\beta)}(t)\|_{2} \le C_{\beta},
\end{equation}
where $J^{(\beta)}(t)$ is defined by~\eqref{eq:1.13}.
\end{lem}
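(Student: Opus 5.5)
We argue entirely on the Fourier side. Plancherel (up to the normalizing constant of $\mathcal{F}$) gives
\[
\|J^{(\beta)}(t)\|_2^2 = C\int_{\mathbb{R}^2} e^{-2\beta|\xi|^2}\bigl(1-e^{-\frac{\nu}{2}t|\xi|^2}\bigr)^2 \frac{\sin^2(t|\xi|)}{|\xi|^2}\,d\xi ,
\]
and the analogous identity for $\|\nabla J^{(\beta)}(t)\|_2^2$ has the extra factor $|\xi|^2$. Both bounds then reduce to elementary estimates of radial integrals in polar coordinates, $d\xi = 2\pi r\,dr$ with $r=|\xi|$.

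\textbf{Gradient bound.} The weight $|\xi|^2$ cancels the singular factor $|\xi|^{-2}$. Since $0\le 1-e^{-\frac{\nu}{2}t|\xi|^2}\le 1$ and $\sin^2\le1$, we get
\[
\|\nabla J^{(\beta)}(t)\|_2^2\le C\int_{\mathbb{R}^2} e^{-2\beta|\xi|^2}\,d\xi = C\pi/(2\beta),
\]
uniformly in $t\ge0$. This gives the second inequality in \eqref{eq:2.5}.

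\textbf{$L^2$ bound.} We split the radial integral at $r=1$.
\begin{itemize}
\item For $r\ge1$, the integrand is at most $e^{-2\beta r^2} r^{-2}$, which contributes a constant $C_\beta$.
\item For $r\le1$, the Gaussian $e^{-2\beta r^2}$ is at most $1$, so this part is bounded by
\[
2\pi\int_0^1 \bigl(1-e^{-\frac{\nu}{2}tr^2}\bigr)^2 \frac{\sin^2(tr)}{r}\,dr .
\]
\end{itemize}
For $t\le1$, this is bounded by $2\pi\int_0^1 t^2 r\,dr\le C$, using $|\sin(tr)|\le tr$. For $t\ge1$, we split again at $r=1/t$.
\begin{itemize}
\item On $r\le1/t$, we use $|\sin(tr)|\le tr$ and drop the bracket (it is at most $1$), giving at most $\int_0^{1/t} t^2 r\,dr = 1/2$.
\item On $1/t\le r\le1$, we use $\sin^2\le1$ and again drop the bracket, giving at most $\int_{1/t}^1 dr/r = \log t$.
\end{itemize}
Altogether $\|J^{(\beta)}(t)\|_2^2\le C_\beta(1+\log t)\le C_\beta\log(t+e)$ for all $t\ge0$, which is the first inequality in \eqref{eq:2.5}.

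\textbf{Comments.} The factor $(e^{-\frac{\nu}{2}t|\xi|^2}-1)$ is only used through the trivial bound $\le 1$. One could use the better bound $\le C t r^2$ near the origin, but it is not needed for the upper bound. The only step that needs any care is the logarithmic region $1/t\le r\le 1$, which is exactly where the non-$L^2$ character of $\omega(t,\xi)$ in two dimensions appears. The computation there is routine, so I expect no real obstacle.
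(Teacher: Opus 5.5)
Your proposal is correct and follows essentially the same route as the paper. Both arguments use Plancherel and polar coordinates, bound the gradient by $\int_{\mathbb{R}^2} e^{-2\beta|\xi|^2}\,d\xi$, and for $t\ge1$ split at $r=1/t$ so that $\int_{1/t}^1 dr/r=\log t$ gives the growth; the case $t\le1$ is handled with $|\sin(tr)|\le tr$. The only cosmetic difference is that on $r\le 1/t$ the paper also uses $|e^{-\nu tr^2}-1|\lesssim tr^2$, which, as you note, is unnecessary because $|\sin(tr)|\le tr$ alone already gives an $O(1)$ contribution there.
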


\begin{proof}
Using Plancherel's theorem and
passing to polar coordinates in $\mathbb{R}^2$, we obtain
\begin{align*}
\|J^{(\beta)}(t)\|_2^2
&\le C \int_0^\infty e^{-2\beta r^2}
\bigl|e^{-\nu tr^2}-1\bigr|^2\frac{\sin^2(tr)}{r}\,dr.
\end{align*}

Assume first that $t\ge1$ and split the integral at $r=t^{-1}$.  For
$0\le r\le t^{-1}$, using
$|e^{-\nu tr^2}-1|\lesssim tr^2$ and $|\sin(tr)|\lesssim tr$, we get
\[
e^{-2\beta r^2}\bigl|e^{-\nu tr^2}-1\bigr|^2
\frac{\sin^2(tr)}{r}
\lesssim t^4 r^5.
\]
Hence
\[
J_{1}:=
\int_0^{t^{-1}} e^{-2\beta r^2}
\bigl|e^{-\nu tr^2}-1\bigr|^2\frac{\sin^2(tr)}{r}\,dr
\lesssim t^4 \int_0^{t^{-1}} r^5\,dr \lesssim 1.
\]
For $r\ge t^{-1}$, we simply use
$|e^{-\nu tr^2}-1|\le 1$ and $|\sin(tr)|\le 1$ to obtain
\[
J_{2}:=\int_{t^{-1}}^\infty e^{-2\beta r^2}
\bigl|e^{-\nu tr^2}-1\bigr|^2\frac{\sin^2(tr)}{r}\,dr 
\le C \int_{t^{-1}}^1 \frac{dr}{r}
 + C \int_1^\infty e^{-2\beta r^2}\,dr
\le C+ C \log t.
\]
Therefore, for $t\ge1$,
\[
\|J^{(\beta)}(t)\|_2^2 \le J_{1}+J_{2} \le C+ C \log t.
\]

If $0\le t\le1$, then $|e^{-\nu tr^2}-1|\le1$, and hence
\begin{equation*}
\begin{split}
\|J^{(\beta)}(t)\|_2^2
& \le C \int_0^\infty e^{-2\beta r^2}\frac{\sin^2(tr)}{r}\,dr \\
& \le Ct^2\int_0^1  e^{-2\beta r^2} \left| \frac{\sin(tr)}{tr} \right|^{2} r\,dr
 + C\int_1^\infty e^{-2\beta r^2}\,dr \\
& \le Ct^2\int_0^1 r\,dr
 + C\int_1^\infty e^{-2\beta r^2}\,dr
\le C.
\end{split}
\end{equation*}
Combining the two cases yields
\[
\|J^{(\beta)}(t)\|_2^2 \le C \log(t+e),
\]
and we arrive at the desired estimate 
\[
\|J^{(\beta)}(t)\|_2
\le C_\beta \sqrt{\log(t+e)}.
\]

For the gradient, Plancherel's theorem gives
\[
\|\nabla J^{(\beta)}(t)\|_2^2
\le C \int_{\mathbb{R}^2}
|\xi|^2 e^{-2\beta |\xi|^2}
\bigl|e^{-\nu t|\xi|^2}-1\bigr|^2
\omega(t,\xi)^{2}
\,d\xi \le C\int_{\mathbb{R}^2} e^{-2\beta |\xi|^2}\,d\xi
<\infty.
\]
This proves~\eqref{eq:2.5}.
\end{proof}

The following proposition describes the asymptotic behavior of
$J^{(\beta)}(t)\ast g$ and is essential in the proof of
Theorem~\ref{thm:1.2}.

\begin{prop}\label{prop:2.4}
Let $n=2$, $\beta >0$ and $g \in L^{1}(\mathbb{R}^{2})$.  Then
\begin{equation} \label{eq:2.6}
\| J^{(\beta)}(t) \ast g-m_{g} J^{(\beta)}(t) \|_{2}
=o\bigl(\sqrt{\log t}\bigr)
\end{equation}
as $t \to \infty$.
\end{prop}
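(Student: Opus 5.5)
The plan is to use the standard density/translation argument, with Lemma~\ref{lem:2.3} supplying both the $\sqrt{\log}$ growth bound and the uniform gradient bound. The idea is to write
\[
J^{(\beta)}(t)\ast g-m_gJ^{(\beta)}(t)=\int_{\mathbb{R}^2}\bigl(J^{(\beta)}(t,\cdot-y)-J^{(\beta)}(t,\cdot)\bigr)g(y)\,dy .
\]
Fix $\varepsilon>0$ and choose $R>0$ so that $\int_{|y|>R}|g(y)|\,dy<\varepsilon$. Then split the $y$-integral into the regions $|y|\le R$ and $|y|>R$.

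\textbf{Far region $|y|>R$.} By Minkowski's integral inequality and translation invariance of the $L^2$ norm, this part is bounded by
\[
2\|J^{(\beta)}(t)\|_2\int_{|y|>R}|g(y)|\,dy\le 2C_\beta\,\varepsilon\sqrt{\log(t+e)},
\]
using the first bound in~\eqref{eq:2.5}.

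\textbf{Near region $|y|\le R$.} Here I will use the mean value theorem in $L^2$:
\[
\|J^{(\beta)}(t,\cdot-y)-J^{(\beta)}(t,\cdot)\|_2\le |y|\,\|\nabla J^{(\beta)}(t)\|_2\le C_\beta R .
\]
One can justify this either by Plancherel, via $|e^{-iy\cdot\xi}-1|\le|y||\xi|$, or by writing the difference as $-\int_0^1 y\cdot\nabla J^{(\beta)}(t,\cdot-sy)\,ds$. This is where the second (gradient) estimate in~\eqref{eq:2.5} enters, and it is the key point: the Gaussian factor $e^{-\beta|\xi|^2}$ makes the gradient bounded uniformly in $t$. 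Applying Minkowski again, the near part is at most $C_\beta R\|g\|_1$, which is independent of $t$.

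\textbf{Conclusion.} Combining the two regions,
\[
\frac{\|J^{(\beta)}(t)\ast g-m_gJ^{(\beta)}(t)\|_2}{\sqrt{\log t}}\le 2C_\beta\varepsilon\sqrt{\frac{\log(t+e)}{\log t}}+\frac{C_\beta R\|g\|_1}{\sqrt{\log t}} .
\]
Letting $t\to\infty$, the limsup is at most $2C_\beta\varepsilon$. Since $\varepsilon$ is arbitrary, this gives~\eqref{eq:2.6}.

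The only mildly delicate step is the translation estimate through the gradient. With Lemma~\ref{lem:2.3} in hand it is routine, since the uniform gradient bound was precisely designed for this purpose. Note that no continuity of $\widehat g$ is needed, only $g\in L^1$.
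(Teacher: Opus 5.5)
Your proof is correct and follows the paper's argument. It uses the same near/far splitting of the $y$-integral, with the uniform gradient bound of Lemma~\ref{lem:2.3} on the near part and the $\sqrt{\log(t+e)}$ bound on the far part; the only difference is that you use a fixed radius $R$ with an $\varepsilon$--$\limsup$ argument, whereas the paper takes the growing radius $L_t=(\log(t+e))^{1/4}$ to get the $o(\sqrt{\log t})$ bound directly.
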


\begin{proof}
Put $L_t:=(\log(t+e))^{1/4}$ for the simplicity.  Since
\[
J^{(\beta)}(t)\ast g-m_g J^{(\beta)}(t)
=\int_{\mathbb{R}^2}
\bigl(J^{(\beta)}(t,\cdot-y)-J^{(\beta)}(t,\cdot)\bigr)g(y)\,dy,
\]
we decompose this integral as $I_1(t)+I_2(t)$, where
\begin{align*}
I_1(t,x)&:=\int_{|y|\le L_t}
\bigl(J^{(\beta)}(t,x-y)-J^{(\beta)}(t,x)\bigr)g(y)\,dy,\\
I_2(t,x)&:=\int_{|y|>L_t}
\bigl(J^{(\beta)}(t,x-y)-J^{(\beta)}(t,x)\bigr)g(y)\,dy.
\end{align*}

We first estimate $I_1$.  By the mean value theorem,
\[
|J^{(\beta)}(t,x-y)-J^{(\beta)}(t,x)|
\le |y|\int_0^1
|\nabla_x J^{(\beta)}(t,x-\theta y)|\,d\theta.
\]
Therefore, by Minkowski's inequality and \eqref{eq:2.5},
\begin{align*}
\|I_1(t)\|_2
&\le \int_{|y|\le L_t}|y|\,|g(y)|
\int_0^1\|\nabla_x J^{(\beta)}(t,\cdot-\theta y)\|_2\,d\theta\,dy\\
&\le C_\beta L_t\|g\|_1
=O\bigl((\log(t+e))^{1/4}\bigr)
=o\bigl(\sqrt{\log t}\bigr).
\end{align*}

Next, using \eqref{eq:2.5}, we have
\begin{align*}
\|I_2(t)\|_2
&\le \int_{|y|>L_t}
\|J^{(\beta)}(t,\cdot-y)-J^{(\beta)}(t,\cdot)\|_2 |g(y)|\,dy\\
&\le 2\|J^{(\beta)}(t)\|_2
\int_{|y|>L_t}|g(y)|\,dy\\
&\le C_\beta\sqrt{\log(t+e)}
\int_{|y|>L_t}|g(y)|\,dy
=o\bigl(\sqrt{\log(t+e)}\bigr),
\end{align*}
because $L_t\to\infty$ and $g\in L^1(\mathbb{R}^2)$.

Combining the estimates for $I_1(t)$ and $I_2(t)$ proves the claim.
\end{proof}

\section{Proof of main results}

In this section we prove Theorems~\ref{thm:1.1}--\ref{thm:1.2} and
Corollary~\ref{cor:1.3}.

\subsection{Proof of Theorem \ref{thm:1.1}}
At first, we observe that 
\begin{equation} \label{eq:3.1}
\begin{split}
\| D(t)g \|_{2} 
\le \left\| G(t) - \mathcal{F}^{-1} \left[ \omega(t,\xi) \right] \right\|_{2} \| g\|_{1}.
\end{split}
\end{equation}
To this end, we show the estimate for 
$\left\| \hat{G}(t) - \omega(t,\xi)\right\|_{2}$.
Then changing the variable, we see that 
\begin{equation*}
\begin{split}
\left\| \hat{G}(t) - \omega(t,\xi)\right\|_{2}^{2}
& = \frac{\sqrt{t}}{2\pi} 
\int_{-\infty}^{\infty} \frac{\sin^2(\sqrt{t} \xi)}{\xi^2} (1 - e^{-\nu \xi^2})^2 d\xi \\
& =I_{1}+I_{2},
\end{split}
\end{equation*}
where 
\begin{equation*}
\begin{split}
I_{1} & := \frac{\sqrt{t}}{2\pi} 
\int_{|\xi| \le 1} \frac{\sin^2(\sqrt{t} \xi)}{\xi^2} (1 - e^{-\nu \xi^2})^2 d\xi, \\
I_{2} & := \frac{\sqrt{t}}{2\pi} 
\int_{|\xi| \ge 1} \frac{\sin^2(\sqrt{t} \xi)}{\xi^2} (1 - e^{-\nu \xi^2})^2 d\xi.
\end{split}
\end{equation*}

For $I_{1}$, 
applying the mean value theorem to have 
\begin{equation*}
\begin{split}
1 - e^{-\nu \xi^2} = \nu \xi^{2}  e^{-\nu \theta \xi^2} 
\end{split}
\end{equation*}
for $\theta \in (0,1)$, 
we see that 
\begin{equation} \label{eq:3.2}
\begin{split}
I_{1} & \le C \sqrt{t} 
\int_{|\xi| \le 1}  \xi^2 d\xi \le C \sqrt{t}.
\end{split}
\end{equation}
On the other hand, 
we also have 
\begin{equation} \label{eq:3.3}
\begin{split}
I_{2} & \le C \sqrt{t}
\int_{|\xi| \ge 1} \frac{1}{\xi^2} d\xi \le C \sqrt{t}.
\end{split}
\end{equation}
Therefore we conclude that 
\begin{equation} \label{eq:3.4}
\begin{split}
I_{1}+I_{2} \le \frac{\sin^2(t \xi)}{\xi^2} (1 - e^{-\nu t \xi^2})^2 d\xi \le C \sqrt{t}
\end{split}
\end{equation}
by \eqref{eq:3.2} and \eqref{eq:3.3}.
Summing up \eqref{eq:3.1} and \eqref{eq:3.4},
we obtain the desired estimate \eqref{eq:1.8}.
\subsection{Proof of Theorem \ref{thm:1.2}}
We follow the smoothing argument used in
\cite{CI, CT, Ike2023, Ta2026}.  The purpose of introducing the
factor $e^{-\beta|\xi|^{2}}$ is to remove the high-frequency part of
the multiplier without changing the logarithmic growth coming from the
low-frequency region.  By Plancherel's theorem and the definition of
$D(t)$, we have
\begin{equation} \label{eq:3.5}
\begin{split}
\|D(t)g\|_{2}
&= \|\widehat{D(t)g}\|_{2}  \\
&\ge \|e^{-\beta|\xi|^{2}}\widehat{D(t)g}\|_{2} = \|J^{(\beta)}(t)\ast g\|_{2} \\
&\ge |m_g|\|J^{(\beta)}(t)\|_{2}
   -\|J^{(\beta)}(t)\ast g-m_gJ^{(\beta)}(t)\|_{2}.
\end{split}
\end{equation}
Therefore,
by \eqref{eq:2.5}, it remains only to
derive a lower bound for $\|J^{(\beta)}(t)\|_{2}$ of order
$\sqrt{\log t}$.

We assume, without loss of generality, that $t\ge4$.  By Plancherel's
theorem,
\begin{equation*}
\begin{split}
\|J^{(\beta)}(t)\|_{2}^{2}
&\ge \int_{t^{-1/2}\le|\xi|\le1}
e^{-2\beta|\xi|^{2}}
\left|e^{-\nu t|\xi|^{2}}-1\right|^{2}
|\omega(t,\xi)|^{2} \,d\xi.
\end{split}
\end{equation*}
Using
\[
\sin^{2}(t|\xi|)=\frac{1-\cos(2t|\xi|)}{2},
\]
we decompose the last integral as
\[
\|J^{(\beta)}(t)\|_{2}^{2}\ge A_{1}-A_{2},
\]
where
\begin{equation*}
\begin{split}
A_{1}
&:= \int_{t^{-1/2}\le|\xi|\le1}
\frac{
e^{-2\beta|\xi|^{2}}
\left|e^{-\nu t|\xi|^{2}}-1\right|^{2}}
{2|\xi|^{2}}\,d\xi, \\
A_{2}
&:= \int_{t^{-1/2}\le|\xi|\le1}
e^{-2\beta|\xi|^{2}}
\left|e^{-\nu t|\xi|^{2}}-1\right|^{2}
\frac{\cos(2t|\xi|)}{2|\xi|^{2}}\,d\xi .
\end{split}
\end{equation*}

We first estimate the non-oscillatory term $A_{1}$.  Passing to polar
coordinates, and using $tr^{2}\ge1$ on
$t^{-1/2}\le r\le1$, we obtain
\begin{equation*}
\begin{split}
A_{1}
&= \pi \int_{t^{-1/2}}^{1}
e^{-2\beta r^{2}}
\left|e^{-\nu tr^{2}}-1\right|^{2}\frac{dr}{r} \\
&\ge \pi e^{-2\beta}(1-e^{-\nu})^{2}
\int_{t^{-1/2}}^{1}\frac{dr}{r} \\
&= \frac{\pi}{2}e^{-2\beta}(1-e^{-\nu})^{2}\log t .
\end{split}
\end{equation*}
Thus $A_{1}$ gives the desired logarithmic growth.

It remains to show that the oscillatory term $A_{2}$ is bounded.  In
polar coordinates,
\[
A_{2}
= \pi \int_{t^{-1/2}}^{1}
e^{-2\beta r^{2}}
\left(e^{-\nu tr^{2}}-1\right)^{2}
\frac{\cos(2tr)}{r}\,dr .
\]
Set
\[
F_t(r):=
e^{-2\beta r^{2}}
\left(e^{-\nu tr^{2}}-1\right)^{2}
\frac{1}{r}.
\]
Then integration by parts gives
\begin{equation*}
A_{2}
= \frac{\pi}{2t}
\left[F_t(r)\sin(2tr)\right]_{r=t^{-1/2}}^{1}
-\frac{\pi}{2t}
\int_{t^{-1/2}}^{1}F_t'(r)\sin(2tr)\,dr .
\end{equation*}
The boundary term is $O(t^{-1/2})$, because
$F_t(1)=O(1)$ and $F_t(t^{-1/2})=O(t^{1/2})$.

We next estimate the integral involving $F_t'$.  A direct computation
shows that
\begin{equation*}
\begin{split}
F_t'(r)
={}&
-4\beta e^{-2\beta r^{2}}
\left(e^{-\nu tr^{2}}-1\right)^{2}  \\
&-4\nu t e^{-2\beta r^{2}}e^{-\nu tr^{2}}
\left(e^{-\nu tr^{2}}-1\right)  \\
&-e^{-2\beta r^{2}}
\left(e^{-\nu tr^{2}}-1\right)^{2}\frac{1}{r^{2}} .
\end{split}
\end{equation*}
Therefore,
\begin{equation*}
\begin{split}
\frac{1}{t}\int_{t^{-1/2}}^{1}
\left(e^{-\nu tr^{2}}-1\right)^{2}\,dr
&=O(t^{-1}), \\
\int_{t^{-1/2}}^{1}
e^{-\nu tr^{2}}
\left|e^{-\nu tr^{2}}-1\right|\,dr
&=O(t^{-1/2}), \\
\frac{1}{t}\int_{t^{-1/2}}^{1}\frac{dr}{r^{2}}
&=O(t^{-1/2}).
\end{split}
\end{equation*}
Consequently,
\[
A_{2}=O(1)
\qquad \text{as } t\to\infty .
\]
Combining the estimates for $A_{1}$ and $A_{2}$, we obtain
\[
\|J^{(\beta)}(t)\|_{2}^{2}
\ge C\log t-C
\]
for all sufficiently large $t$.  Hence,
\[
\|J^{(\beta)}(t)\|_{2}\ge C\sqrt{\log t}
\]
for sufficiently large $t$.

Finally, returning to \eqref{eq:3.5} and using \eqref{eq:2.6},
we conclude that
\[
\|D(t)g\|_{2}
\ge C|m_g|\sqrt{\log t}
\]
as $t\to\infty$.  This proves Theorem~\ref{thm:1.2}.

\subsection{Proof of Corollrary \ref{cor:1.3}}

Using the representation formula~\eqref{eq:2.1}, we decompose
\begin{equation} \label{eq:3.6}
u(t)-W(t)u_{1}
=K_{0}(t)\ast u_{0}
 +(K_{1}(t)-G(t))\ast u_{1}
 +D(t)u_{1}.
\end{equation}
In the case $n=1$, a direct estimate of~\eqref{eq:3.6}, based on
\eqref{eq:1.11}, \eqref{eq:2.2}, and~\eqref{eq:2.4}, yields
\eqref{eq:1.14}.

We next consider the case $n=2$. The corresponding upper bound is
already known from~\eqref{eq:1.5} and~\eqref{eq:1.9}; hence it remains
only to prove the lower bound in~\eqref{eq:1.15}. Since $m_{1}\ne0$,
\eqref{eq:1.12} implies that there exist constants $c>0$ and $T>0$
such that
\begin{equation} \label{eq:3.7}
\|D(t)u_{1}\|_{2}
\ge c |m_{1}|\sqrt{\log t},
\qquad t\ge T .
\end{equation}
On the other hand, \eqref{eq:2.2} and~\eqref{eq:2.5} give
\[
\|K_{0}(t)\ast u_{0}\|_{2}
+
\|(K_{1}(t)-G(t))\ast u_{1}\|_{2}
=o(\sqrt{\log t})
\]
as $t\to\infty$. Therefore, applying the triangle inequality to
\eqref{eq:3.6}, we obtain
\[
\|u(t)-W(t)u_{1}\|_{2}
\ge \|D(t)u_{1}\|_{2}
 -\|K_{0}(t)\ast u_{0}\|_{2}
 -\|(K_{1}(t)-G(t))\ast u_{1}\|_{2}.
\]
Combining this inequality with~\eqref{eq:3.7}, and decreasing $c>0$ if
necessary, we conclude that
\[
\|u(t)-W(t)u_{1}\|_{2}
\ge c|m_{1}|\sqrt{\log t}
\]
for sufficiently large $t$. This proves
Corollary~\ref{cor:1.3}.

\vspace*{5mm}
\noindent
\textbf{Acknowledgments. }
\smallskip
The second author was partially supported by
JSPS KAKENHI Grant Number 24K06822.

\end{document}